\newcommand{\vect}[1]{\ensuremath{\mathbf{#1}}}
\newcommand{\card}[1]{\ensuremath{\lvert{#1}\rvert}}
\newcommand{\cl}[1]{\ensuremath{\mathcal{#1}}}
\newcommand{\nset}[1]{\ensuremath{[{#1}]}}
\newcommand{\couples}[1][\nset{n}]{\ensuremath{\binom{#1}{2}}} 
\newcommand{\multiset}[1]{\ensuremath{\langle{#1}\rangle}} 
\DeclareMathOperator{\deck}{deck}                
\DeclareMathOperator{\setdeck}{set-deck}         
\DeclareMathOperator{\pr}{pr}                    
\DeclareMathOperator{\id}{id}                    
\DeclareMathOperator{\supp}{supp}                
\DeclareMathOperator{\oddsupp}{oddsupp}          
\DeclareMathOperator{\Clo}{Clo}                  
\theoremstyle{plain}
\newtheorem{theorem}{Theorem}[section]
\newtheorem{proposition}[theorem]{Proposition}
\newtheorem{lemma}[theorem]{Lemma}
\newtheorem{corollary}[theorem]{Corollary}
\theoremstyle{definition}
\theoremstyle{remark}
\newtheorem{remark}[theorem]{Remark}
\newcommand{\PostsLattice}[1]{
  \begin{tikzpicture}[scale=#1, transform shape]
    \tikzstyle{every node} = [circle, fill=black,scale=0.5]
    \tikzstyle{every label} = [scale=2,draw=none, fill=none, label distance=-4]
    \node (P2) [label=above:$\Omega$] at (7.0+1.0   ,10.5+2.5+1.2) {};
    \node (T0) [label=above left:$T_0$]  at (7.0-3.0,10.5-.5+2.5+.85) {};
    \node (T1) [label=above right:$T_1$] at (7.0+3.0,10.5-.5+2.5+.85) {};
    \node (T)  
          at (7.0-1.0,10.5-1.0+3) {};
    \node (M)  [label=above left:$M$] at (6.76+.1+.4   ,9.38+1.1) {};
    \node (T0M)
          at (7.23-1.5-.4+.2-.2, 9.51-.5+1-.1) {};
    \node (T1M)
          at (6.76+1.0+.4+.3+.2, 9.38-.5+1-.1) {};
    \node (TM) 
          at (7.23-.5, 9.51-1.0+1-.3) {};

    \node (L)  [label=above right:$L$] at (7.0+.5   , 5.0-.2+.2) {};
    \node (T0L)
          at (7.0-1.6, 5.0-.2-.5) {};
    \node (T1L)
          at (7.0+1.6, 5.0-.2-.5) {};
    \node (TL)
          at (7.0-.5, 5.0-.2-1.0-.2) {};
    \node (SL) 
          at (7.0+0.0, 5.0-.2-0.5) {};
    
    \node (S) [label=above right:$S$] at (7+.1,7.0-0.05+.4) {};
    \node (ST) 
          at (6.5,6.5-0.2+.2) {};
    \node (SM) 
          at (6-.1,6.0-0.35) {};

    \node (P21) [label=right:$\Omega(1)$] at (7.0+0.5,3.5-1) {};
    \node (SP21)
          at (7.0-0.2, 3.5-2) {};
    \node (AV)  
          at (7.0 +.5  , 3.0-4 +.75) {};
    \node (T0AV)
          at (7.0-2, 3.0-4) {};
    \node (T1AV)
          at (7.0+2, 3.0-4) {};
    \node (TAV) 
          at (7.0-.5,  3.0-4-.75) {}; 

    \foreach \from/\to in {
          P2/T0, P2/T1, T0/T, T1/T,
          M/T0M, M/T1M, T0M/TM, T1M/TM,
          P2/M, T0/T0M, T1/T1M, T/TM,
          L/T0L, L/T1L, T0L/TL, T1L/TL,
          P2/L, T0/T0L, T1/T1L, T/ST, ST/TL,
          AV/T0AV, AV/T1AV, T0AV/TAV, T1AV/TAV,
          L/P21, P21/AV, T0L/T0AV, T1L/T1AV, TL/TAV,
          P2/S, S/ST, S/SL, L/SL, SL/TL, SL/SP21, P21/SP21, SP21/TAV,
          ST/SM, SM/TAV}
    \draw [-] (\from) -- (\to);

    \node (T02)  [label=left:$U_2$] at (0, 8.5) {};
    \node (T02T) 
          at (0+1.0, 8.5-.7) {};
    \node (T02M) 
          at (0+2.0, 8.5-.3) {};
    \node (T02TM)
          at (0+3.0, 8.5-1.0) {};
    \node (T03)  [label=left:$U_3$] at (0, 7.0) {};
    \node (T03T) 
          at (0+1.0, 7.0-.7) {};
    \node (T03M) 
          at (0+2.0, 7.0-.3) {};
    \node (T03TM)
          at (0+3.0, 7.0-1.0) {};
    \node (T0H) [draw=none, fill=none, scale=0.1] at (0, 6.5) {};
    \node (T0HT) [draw=none, fill=none, scale=0.1] at (0+1.0, 6.5-.7) {};
    \node (T0HM) [draw=none, fill=none, scale=0.1] at (0+2.0, 6.5-.3) {};
    \node (T0HTM) [draw=none, fill=none, scale=0.1] at (0+3.0, 6.5-1.0) {};
    \node (T0e)  [label=left:$U_\infty$] at (0, 5.0) {};
    \node (T0eT) 
          at (0+1.0, 5.0-.7) {};
    \node (T0eM) 
          at (0+2.0, 5.0-.3) {};
    \node (T0eTM)
          at (0+3.0, 5.0-1.0) {};
    
    \node (A) [label=above right:$\Lambda$] at (0+3.5,3.5-.5) {};
    \node (AT1) 
          at (0+4.5,3.0-.5) {};
    \node (AT0) 
          at (0+2.0,3.0-.5) {};
    \node (AT) 
          at (0+3.0,2.5-.5) {}; 

    \node (T12)  [label=right:$W_2$] at (14, 8.5) {};
    \node (T12T) 
          at (14-1.0, 8.5-.7) {};
    \node (T12M) 
          at (14-2.0, 8.5-.3) {};
    \node (T12TM)
          at (14-3.0, 8.5-1.0) {};
    \node (T13)  [label=right:$W_3$] at (14, 7.0) {};
    \node (T13T) 
          at (14-1.0, 7.0-.7) {};
    \node (T13M) 
          at (14-2.0, 7.0-.3) {};
    \node (T13TM)
          at (14-3.0, 7.0-1.0) {};
    \node (T1H) [draw=none, fill=none, scale=0.1] at (14, 6.5) {};
    \node (T1HT) [draw=none, fill=none, scale=0.1] at (14-1.0, 6.5-.7) {};
    \node (T1HM) [draw=none, fill=none, scale=0.1] at (14-2.0, 6.5-.3) {};
    \node (T1HTM) [draw=none, fill=none, scale=0.1] at (14-3.0, 6.5-1.0) {};
    \node (T1e)  [label=right:$W_\infty$] at (14, 5.0) {};
    \node (T1eT) 
          at (14-1.0, 5.0-.7) {};
    \node (T1eM) 
          at (14-2.0, 5.0-.3) {};
    \node (T1eTM)
          at (14-3.0, 5.0-1.0) {};

    \node (V) [label=above left:$V$] at (14-3.5,3.5-.5) {};
    \node (VT0) 
          at (14-4.5,3.0-.5) {};
    \node (VT1) 
          at (14-2.0,3.0-.5) {};
    \node (VT) 
          at (14-3.0,2.5-.5) {}; 

    \foreach \from/\to in {
          T02/T02T, T02/T02M, T02T/T02TM, T02M/T02TM,
          T03/T03T, T03/T03M, T03T/T03TM, T03M/T03TM,
          T0e/T0eT, T0e/T0eM, T0eT/T0eTM, T0eM/T0eTM,
          T12/T12T, T12/T12M, T12T/T12TM, T12M/T12TM,
          T13/T13T, T13/T13M, T13T/T13TM, T13M/T13TM,
          T1e/T1eT, T1e/T1eM, T1eT/T1eTM, T1eM/T1eTM,
          T02/T03, T02T/T03T, T02M/T03M, T02TM/T03TM,
          T03/T0H, T03T/T0HT, T03M/T0HM, T03TM/T0HTM,
          T12/T13, T12T/T13T, T12M/T13M, T12TM/T13TM,
          T13/T1H, T13T/T1HT, T13M/T1HM, T13TM/T1HTM,
          T0eM/AT0, T0eTM/AT,
          T1eM/VT1, T1eTM/VT,
          A/AT0, A/AT1, AT0/AT, AT1/AT,
          V/VT0, V/VT1, VT0/VT, VT1/VT,
          A/AV, AT0/T0AV, AT1/T1AV, AT/TAV,
          V/AV, VT0/T0AV, VT1/T1AV, VT/TAV,
          T02TM/SM, T12TM/SM,
          T0/T02, T0M/T02M, T/T02T, TM/T02TM,
          T1/T12, T1M/T12M, T/T12T, TM/T12TM}
    \draw [-] (\from) -- (\to);
    \path (M) edge [out=215, in=90] (A);
    \path (T1M) edge [out=215, in=90] (AT1);
    \path (M) edge [out=325, in=90] (V);
    \path (T0M) edge [out=325, in=90] (VT0);

    \foreach \from/\to in {
          T0H/T0e, T0HT/T0eT, T0HM/T0eM, T0HTM/T0eTM,
          T1H/T1e, T1HT/T1eT, T1HM/T1eM, T1HTM/T1eTM}
    \draw [dotted] (\from) -- (\to);
  \end{tikzpicture}
}
\begin{document}
\title{Set-reconstructibility of Post classes}

\author{Miguel Couceiro}
\address[M. Couceiro]{LAMSADE -- CNRS \\
Universit\'e Paris-Dauphine \\
Place du Mar\'echal de Lattre de Tassigny \\
75775 Paris Cedex 16 \\
France}
\email{miguel.couceiro@dauphine.fr}

\author{Erkko Lehtonen}
\address[E. Lehtonen]{University of Luxembourg \\
Computer Science and Communications Research Unit \\
6, rue Richard Coudenhove-Kalergi \\
L--1359 Luxembourg \\
Luxembourg}
\email{erkko.lehtonen@uni.lu}

\author{Karsten Sch\"olzel}
\address[K. Sch\"olzel]{University of Luxembourg \\
Mathematics Research Unit \\
6, rue Richard Coudenhove-Kalergi \\
L--1359 Luxembourg \\
Luxembourg}
\email{karsten.schoelzel@uni.lu}

\date{\today}

\begin{abstract}
The clones of Boolean functions are classified in regard to set-reconstructibility via a strong dichotomy result:
the clones containing only affine functions, conjunctions, disjunctions or constant functions are set-re\-con\-struct\-ible,
whereas the remaing clones are not weakly reconstructible.
\end{abstract}

\maketitle


\section{Introduction}
\label{sec:intro}

Reconstruction problems have been considered in various fields of mathematics and theoretical computer science,
and they share the same meta-formulation: \emph{given a family of ``objects'' and a systematic way of forming some sort of ``derived objects'',
is an object uniquely determined (up to a sort of equivalence) by the collection of its derived objects?}
It is possible that the same derived object arises from a given object in many different ways, and we usually keep track of the number of times each derived object arises; in other words, ``collection'' means the \textbf{multiset} of derived objects.
On the other hand, if we ignore the numbers of occurrences of the derived objects, i.e., we take ``collection'' to mean the \textbf{set} of derived objects, then we are dealing with what is referred to as a set-reconstruction problem.

Several instances of this general formulation have become celebrated conjectures that have attracted a great deal of attention within 
the scientific community. Among these, the graph reconstruction conjecture (in both variants of vertex- or edge-deletion) 
\cite{Kelly1942,Ulam} remains one of the most 
challenging that has survived as an open problem for many decades.  Nonetheless, it has been shown to hold for numerous classes
of graphs such as trees, regular graphs, etc. In fact, Bollob\'as \cite{Bollo} 
showed that the probability of finding a non-reconstructible graph tends to 0 as the number of vertices tends to infinity.
In some other noteworthy instances, e.g., for directed graphs and hypergraphs,
reconstructibility has been shown not to hold in general;
see, e.g., 
\cite{Kocay,KocLui,Stockmeyer}. 

In this paper we consider a reconstruction problem for functions of several arguments,
taking the identification of a pair of arguments as the way of forming derived objects:
is a function $f \colon A^n \to B$ determined (up to equivalence) by
its identification minors?

Lehtonen~\cite{LehtonenSymmetric,LehtonenLinear} answers this question positively for certain function classes such as those of symmetric functions or
affine functions. Recently, we showed \cite{CLSmonotone} that the class of order-preserving functions is not reconstructible, 
even if restricted to lattice polynomial functions. 
In the case of Boolean functions, the latter result refines into a classification of Post classes
(clones of Boolean functions):
the only reconstructible Post classes are the ones containing only affine functions, conjunctions, disjunctions or constant functions.
The remaining Post classes are not weakly reconstructible.
 
The purpose of this paper is to make this dichotomy of Post classes even more contrasting: the reconstructible Post classes are actually set-reconstructible. This shows that reconstructibility is the same as set-reconstructibility in this setting.
 
The paper is organized as follows. In Section \ref{sec:preliminaries} we recall the basic notions, 
state preliminary results and formulate the reconstruction problem
for functions of several arguments and identification minors.
We focus on the (set)-reconstructibility of clones of operations in Section \ref{sec:clones}, where we 
provide a dichotomy theorem dealing with the set-reconstructibility of Post classes.


\section{Preliminaries}
\label{sec:preliminaries}

\subsection{General}
\label{sec:preliminaries:general}

Let $\mathbb{N} := \{0, 1, 2, \dots\}$.
Throughout this paper, $k$, $\ell$, $m$ and $n$ stand for positive integers, and $A$ and $B$ stand for arbitrary finite sets with at least two elements. The set $\{1, \dots, n\}$ is denoted by $\nset{n}$. The set of all $2$-element subsets of a set $A$ is denoted by $\couples[A]$. Tuples are denoted by bold-face letters and components of a tuple are denoted by the corresponding italic letters with subscripts, e.g., $\vect{a} = (a_1, \dots, a_n)$.

Let $\vect{a} \in A^n$, and let $\sigma \colon \nset{m} \to \nset{n}$. We will write $\vect{a} \sigma$ to denote the $m$-tuple $(a_{\sigma(1)}, \dots, a_{\sigma(m)})$.
Since the $n$-tuple $\vect{a}$ can be formally seen as the map $\vect{a} \colon \nset{n} \to A$, $i \mapsto a_i$, the $m$-tuple $\vect{a} \sigma$ is just the composite map $\vect{a} \circ \sigma \colon \nset{m} \to A$.

A \emph{finite multiset} $M$ on a set $S$ is a couple $(S, \mathbf{1}_M)$, where $\mathbf{1}_M \colon S \to \mathbb{N}$ is a map, called a \emph{multiplicity function,} such that the set $\{x \in S : \mathbf{1}_M(x) \neq 0\}$ is finite. Then the sum $\sum_{x \in S} \mathbf{1}_M(x)$ is a well-defined natural number, and it is called the \emph{cardinality} of $M$. For each $x \in S$, the number $\mathbf{1}_M(x)$ is called the \emph{multiplicity} of $x$ in $M$.
If $(a_i)_{i \in I}$ is a finite indexed family of elements of $S$, then we will write $\multiset{a_i : i \in I}$ to denote the multiset in which the multiplicity of each $x \in S$ equals $\card{\{i \in I : a_i = x\}}$.

\subsection{Functions of several arguments and identification minors}

A \emph{function} (\emph{of several arguments}) from $A$ to $B$ is a map $f \colon A^n \to B$ for some positive integer $n$, called the \emph{arity} of $f$. Functions of several arguments from $A$ to $A$ are called \emph{operations} on $A$. Operations on $\{0, 1\}$ are called \emph{Boolean functions.}
We denote the set of all $n$-ary functions from $A$ to $B$ by $\cl{F}_{AB}^{(n)}$, and we denote the set of all functions from $A$ to $B$ of any finite arity by $\cl{F}_{AB}$. We also write $\cl{F}_{AB}^{(\geq n)}$ for $\bigcup_{m \geq n} \cl{F}_{AB}^{(m)}$. In other words, $\cl{F}_{AB}^{(n)} = B^{A^n}$ and $\cl{F}_{AB} = \cl{F}_{AB}^{(\geq 1)}$.
We also denote by $\cl{O}_A$ the set of all operations on $A$.
For any class $\mathcal{C} \subseteq \cl{F}_{AB}$, we let $\mathcal{C}^{(n)} := \mathcal{C} \cap \cl{F}_{AB}^{(n)}$ and $\mathcal{C}^{(\geq n)} := \mathcal{C} \cap \cl{F}_{AB}^{(\geq n)}$.

Let $f \colon A^n \to B$. For $i \in \nset{n}$, the $i$-th argument of $f$ is \emph{essential,} or $f$ \emph{depends} on the $i$-th argument, if there exist tuples $\vect{a}, \vect{b} \in A^n$ such that $a_j = b_j$ for all $j \in \nset{n} \setminus \{i\}$ and $f(\vect{a}) \neq f(\vect{b})$. Arguments that are not essential are \emph{inessential.}

We say that a function $f \colon A^n \to B$ is a \emph{minor} of another function $g \colon A^m \to B$, and we write $f \leq g$, if there exists a map $\sigma \colon \nset{m} \to \nset{n}$ such that $f(\vect{a}) = g(\vect{a} \sigma)$ for all $\vect{a} \in A^m$.
The minor relation $\leq$ is a quasiorder on $\cl{F}_{AB}$, and, as for all quasiorders, it induces an equivalence relation on $\cl{F}_{AB}$ by the following rule: $f \equiv g$ if and only if $f \leq g$ and $g \leq f$. We say that $f$ and $g$ are \emph{equivalent} if $f \equiv g$. Furthermore, $\leq$ induces a partial order on the quotient $\cl{F}_{AB} / {\equiv}$. (Informally speaking, $f$ is a minor of $g$, if $f$ can be obtained from $g$ by permutation of arguments, addition of inessential arguments, deletion of inessential arguments, and identification of arguments. If $f$ and $g$ are equivalent, then each one can be obtained from the other by permutation of arguments, addition of inessential arguments, and deletion of inessential arguments.)
We denote the $\equiv$-class of $f$ by $f / {\equiv}$.
Note that equivalent functions have the same number of essential arguments and every nonconstant function is equivalent to a function with no inessential arguments.
Note also in particular that if $f, g \colon A^n \to B$, then $f \equiv g$ if and only if there exists a bijection $\sigma \colon \nset{n} \to \nset{n}$ such that $f(\vect{a}) = g(\vect{a} \sigma)$ for all $\vect{a} \in A^n$.

Of particular interest to us are the following minors. Let $n \geq 2$, and let $f \colon A^n \to B$. For each $I \in \couples$, we define the function $f_I \colon A^{n-1} \to B$ by the rule
$f_I(\vect{a}) = f(\vect{a} \delta_I)$ for all $\vect{a} \in A^{n-1}$,
where $\delta_I \colon \nset{n} \to \nset{n - 1}$ is given by the rule
\[
\delta_I(i) =
\begin{cases}
i, & \text{if $i < \max I$,} \\
\min I, & \text{if $i = \max I$,} \\
i - 1, & \text{if $i > \max I$.}
\end{cases}
\]
In other words, if $I = \{i, j\}$ with $i < j$, then
\[
f_I(a_1, \dots, a_{n-1}) =
f(a_1, \dots, a_{j-1}, a_i, a_j, \dots, a_{n-1}).
\]
Note that $a_i$ occurs twice on the right side of the above equality: both at the $i$-th and at the $j$-th position. We will refer to the function $f_I$ as an \emph{identification minor} of $f$. This nomenclature is motivated by the fact that $f_I$ is obtained from $f$ by identifying the arguments indexed by the pair $I$.

\subsection{Reconstruction problem for functions and identification minors}

Let $f \colon A^n \to B$.
We will refer to the equivalence classes $f_I / {\equiv}$ of the identification minors $f_I$ of $f$ ($I \in \couples$) as the \emph{cards} of $f$.
The \emph{deck} of $f$, denoted $\deck f$, is the multiset $\multiset{f_I / {\equiv} : I \in \couples}$ of the cards of $f$.

We may now ask whether a function $f$ is uniquely determined, up to equivalence, by its deck.
In order to discuss whether and to which extent this is the case, we will use the following terminology that is more or less standard.

We say that a function $f' \colon A^n \to B$ is a \emph{reconstruction} of $f$, or that $f$ and $f'$ are \emph{hypomorphic,} if $\deck f = \deck f'$, or, equivalently, if there exists a bijection $\phi \colon \couples \to \couples$ such that $f_I \equiv f'_{\phi(I)}$ for every $I \in \couples$.
If the last condition holds with $\phi$ equal to the identity map on $\couples$, i.e., if $f_I \equiv f'_I$ for every $I \in \couples$, then we say that $f$ and $f'$ are \emph{strongly hypomorphic.}
Note that strongly hypomorphic functions are necessarily hypomorphic, but the converse is not true in general.

A function is \emph{reconstructible} if it is equivalent to all of its reconstructions.
A class $\mathcal{C} \subseteq \mathcal{F}_{AB}$ of functions is \emph{reconstructible} if all members of $\mathcal{C}$ are reconstructible.
A class $\mathcal{C}$ is \emph{weakly reconstructible} if for every $f \in \mathcal{C}$, all reconstructions of $f$ that are members of $\mathcal{C}$ are equivalent to $f$.
A class $\mathcal{C}$ is \emph{recognizable} if all reconstructions of the members of $\mathcal{C}$ are members of $\mathcal{C}$.
Note that a reconstructible class is necessarily weakly reconstructible, but the converse is not true in general.
If a class is recognizable and weakly reconstructible, then it is reconstructible.

We summarize here some of the known results on the reconstruction problem for functions and identification minors.

\begin{theorem}[{\cite[Thm.~5.1]{LehtonenSymmetric}}]
\label{thm:totsymm}
Assume that $n \geq k + 2$ and $\card{A} = k$. If $f \colon A^n \to B$ is totally symmetric, then $f$ is reconstructible.
\end{theorem}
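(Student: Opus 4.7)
The plan is to show that every reconstruction $f'$ of a totally symmetric $f\colon A^n\to B$ with $n \geq k+2$ (where $k=\card{A}$) in fact satisfies $f' = f$. Since a totally symmetric function is invariant under coordinate permutations, the equality $f'=f$ implies $f'\equiv f$.

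First, I would analyze the structure of $\deck f$. Writing $\hat f$ for the induced map on size-$n$ multisets over $A$, for $I=\{i,j\}$ with $i<j$ the identification minor is
\[
f_I(\vect x) = \hat f\bigl(\multiset{x_1,\ldots,x_{n-1}} + \multiset{x_i}\bigr),
\]
so $f_I$ is symmetric in every argument except the $i$-th (the ``duplicated'' coordinate), and any two cards $f_I,f_{I'}$ are equivalent via a single coordinate transposition. Hence $\deck f$ consists of $\binom{n}{2}$ copies of a single equivalence class $[g]$, and every card of $f'$ is likewise equivalent to $g$, inheriting a distinguished coordinate and full symmetry in the remaining ones.

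Next I would prove recognizability: that $f'$ must itself be totally symmetric. The pigeonhole hypothesis $n \geq k+2 > k$ forces every tuple $\vect b\in A^n$ to contain at least two index pairs $\{p,q\}$ with $b_p=b_q$, coming either from a single value of multiplicity $\geq 3$ or from two distinct values each of multiplicity $\geq 2$. For any such pair, $f'(\vect b) = f'_{\{p,q\}}(\vect b_{-q})$, where $\vect b_{-q}$ denotes the deletion of the $q$-th coordinate. Consistency among these multiple recoveries, combined with the rigidity of the common structure of the cards $f'_{\{p,q\}}$ (all equivalent to $g$), forces $f'(\vect b)$ to depend only on $\multiset{b_1,\ldots,b_n}$, yielding total symmetry of $f'$.

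Finally I would establish weak reconstructibility among totally symmetric functions: with $f$ and $f'$ both totally symmetric and sharing the same deck, both are determined by the same $(n-1)$-ary $g$. For any size-$n$ multiset $M$, one picks (by $n > k$) an element $a$ with multiplicity at least $2$ in $M$, and recovers $\hat f(M)$ by evaluating $g$ on an $(n-1)$-tuple of multiset $M - \multiset{a}$ whose distinguished coordinate takes value $a$; the same recipe applied to $f'$ gives $\hat{f'}(M) = \hat f(M)$, hence $f' = f$.

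The main obstacle is the recognizability step, which demands care when $g$ itself admits extra symmetries (so that a card's distinguished coordinate is not uniquely identifiable from the card as a function); in this degenerate case, one must exploit the induced symmetry of $\hat f$ across multisets differing by one element to force consistency. The strengthened bound $n \geq k+2$ rather than the weaker $n \geq k+1$ (which would still guarantee a single repeated value) is precisely what supplies the two-pair redundancy needed to pin down $f'(\vect b)$ from the deck alone.
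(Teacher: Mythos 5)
The paper does not actually prove this statement: it is imported verbatim from \cite[Thm.~5.1]{LehtonenSymmetric}, so there is no in-text proof to compare against. Judged on its own terms, your outline follows the natural strategy (and, in spirit, the one used in the cited source): all cards of a totally symmetric $f$ are equivalent to a single $(n-1)$-ary function $g$, one shows that any reconstruction $f'$ is again totally symmetric, and one then compares the induced maps on multisets. Your first and third steps are essentially sound; in the third step you do need to check that the equivalence $f'_{\{1,2\}} \equiv f_{\{1,2\}}$ can be realized by a permutation preserving the distinguished coordinate, but this follows because the invariance group of $g$ contains the stabilizer of one point in the symmetric group and is therefore either that stabilizer or the whole group.

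The genuine gap is the recognizability step, which is the entire content of the theorem. The ``consistency among these multiple recoveries'' argument is vacuous as stated: the identities $f'(\vect{b}) = f'_{\{p,q\}}(\vect{b}_{-q})$ all trivially return the same value $f'(\vect{b})$, so their mutual consistency carries no information whatsoever. What actually has to be done is to exploit that each card $f'_I$ equals $g$ composed with an \emph{unknown} permutation, hence is symmetric in all but at most one position whose location depends on $I$ in an uncontrolled way, and then to show that the partial symmetries visible through the various cards generate enough of the symmetric group to make $f'(\vect{b})$ invariant under every transposition. This is a genuinely delicate combinatorial argument: one must deal with tuples whose only repeated pairs meet the transposition being applied, and with the possibility that the distinguished positions of the relevant cards all sit exactly where the transposition acts. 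It is also tight at the boundary $n = k+2$; for instance, when $k=2$ and $n=4$, even the ``degenerate'' case in which $g$ is totally symmetric is not covered by Willard-type results such as Theorem~\ref{thm:Willard2.6}, which require $n \geq \max(k,3)+2$. Your closing remark correctly locates the difficulty, but deferring it means the central step of the proof is missing rather than merely sketched.
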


\begin{theorem}[{\cite[Prop.~5.2]{LehtonenSymmetric}}]
Assume that $n > \max(k, 3)$ and $\card{A} = k$.
The class of totally symmetric functions $f \colon A^n \to B$ is weakly reconstructible.
\end{theorem}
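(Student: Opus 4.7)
For a totally symmetric $f \colon A^n \to B$, the value $f(\vect a)$ depends only on the multiset of entries of $\vect a$; denote this common value by $\tilde f(M)$. Every identification minor $f_I$ is then equivalent to the single function $g(c_1, \dots, c_{n-1}) = f(c_1, c_1, c_2, \dots, c_{n-1})$, which is symmetric in its last $n-2$ arguments while the first argument plays the distinguished role of the doubled entry. Hence $\deck f$ consists of $\binom{n}{2}$ copies of the single class $g / {\equiv}$. The hypothesis $n > k$ now enters via pigeonhole: every $n$-multiset over $A$ has some element of multiplicity at least $2$, so $g$ alone determines $\tilde f$ through $\tilde f(M) = g(v, \vect c)$ whenever $v$ occurs in $M$ with multiplicity $\geq 2$ and $\vect c$ is any tuple whose multiset is $M$ with two copies of $v$ removed.

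Let $f'$ be a totally symmetric reconstruction of $f$, and define $g'$ analogously. The equality $\deck f = \deck f'$ together with the above description of the decks gives $g \equiv g'$, i.e., $g'(\vect c) = g(\vect c \sigma)$ for some permutation $\sigma$ of $\nset{n-1}$. Using the $S_{n-2}$-symmetry of $g$ in the last $n-2$ positions, one reduces to $\sigma = \id$ or $\sigma = (1,k)$ for some $k \in \{2, \dots, n-1\}$. If $\sigma = \id$ then $g = g'$, and applying the recovery formula to both $f$ and $f'$ immediately yields $\tilde f = \tilde{f'}$.

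The case $\sigma = (1,k)$ is the crux. Translating $g' = g \circ \sigma$ together with its dual $g = g' \circ \sigma$ (valid since $\sigma^2 = \id$) into multiset-level identities produces, for every $(n-1)$-multiset $N$ and all $x, y \in \supp(N)$,
\[
\tilde{f'}(N + \multiset{x}) = \tilde f(N + \multiset{y}) \quad \text{and} \quad \tilde f(N + \multiset{x}) = \tilde{f'}(N + \multiset{y}).
\]
Setting $y = x$ (permitted when $x$ has multiplicity $\geq 2$ in $N$) immediately gives $\tilde f = \tilde{f'}$ on every $n$-multiset possessing an element of multiplicity $\geq 3$.

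It remains to treat $n$-multisets $M$ in which every element has multiplicity $\leq 2$, which is the main technical obstacle. One argues through a short sequence of swaps via the above identities: (i) if $M$ contains two distinct multiplicity-$2$ elements, a single swap produces a multiset with a multiplicity-$3$ element, reducing to the case already settled; (ii) if $M$ contains exactly one multiplicity-$2$ element, which forces $n = k+1$ and $\card{\supp(M)} = k$, a two-step chain through another support element still forces $\tilde f(M) = \tilde{f'}(M)$. The chain in (ii) requires $\card{\supp(M)} \geq 3$, and this is precisely where the hypothesis $n > 3$ is used: it rules out the small configurations in which no third support element is available. Combining (i), (ii), and the multiplicity-$\geq 3$ case yields $\tilde f = \tilde{f'}$, whence $f \equiv f'$.
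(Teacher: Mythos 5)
Your statement is quoted in the paper as an imported result (Proposition 5.2 of the cited Lehtonen preprint), so the paper itself contains no proof to compare against; I can only assess your argument on its own terms. It is correct and complete. The skeleton is sound: a totally symmetric $f$ has the unique identification minor $g(c_1,\dots,c_{n-1})=f(c_1,c_1,c_2,\dots,c_{n-1})$, the pigeonhole consequence of $n>k$ makes $g$ together with the knowledge of its distinguished first slot determine $\tilde f$, and the only real issue is that the deck delivers $g$ only up to a permutation of arguments, which (modulo the $S_{n-2}$-symmetry of $g$ and $g'$, i.e., passing to double cosets of the stabilizer of $1$) leaves exactly the two cases $\sigma=\id$ and $\sigma=(1,j)$. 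Your swap identities $\tilde{f'}(N+\multiset{x})=\tilde f(N+\multiset{y})$ and $\tilde f(N+\multiset{x})=\tilde{f'}(N+\multiset{y})$ are correctly derived, and the three-way case analysis on multiplicities (some element of multiplicity $\geq 3$; two distinct doubled elements; exactly one doubled element, forcing $n=k+1$ and $\supp(M)=A$) is exhaustive, since $n>k$ rules out multiplicity-free $n$-multisets. The chain $a_u=b_v=a_w=b_u$ in case (ii) indeed needs three support elements, and $n>3$ together with $n=k+1$ gives $k\geq 3$, so the hypothesis is used exactly where you say it is. Two cosmetic points only: the displayed identities as stated for ``all $x,y\in\supp(N)$'' silently include $x=y$ with multiplicity $1$ in $N$, which is not justified by the derivation — you do flag this restriction a line later, but it would be cleaner to build it into the statement; and the reduction to $\sigma=\id$ or $\sigma=(1,j)$ deserves one explicit sentence about acting by the stabilizer of position $1$ on both sides. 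Neither is a gap.
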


\begin{theorem}[{\cite[Thm.~4.7]{LehtonenLinear}}]
\label{thm:affine}
Let $(G; +, \cdot)$ be a finite field of order $q$. The affine functions of arity at least $\max(q, 3) + 1$ over $(G; +, \cdot)$ are reconstructible.
\end{theorem}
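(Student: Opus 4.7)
The plan is to reduce the problem to a combinatorial one on multisets over $G$, and then to split it into recognizability and weak reconstructibility. Every affine $n$-ary function over $(G; +, \cdot)$ has the form $f(\vect{x}) = c_0 + \sum_{i=1}^{n} c_i x_i$, and two affine functions of the same arity are equivalent if and only if they share the same constant term and the same multiset of coefficients. The identification minor $f_I$ with $I = \{i, j\}$ is again affine of arity $n - 1$, with constant term $c_0$ and coefficient multiset $C_I = (C \setminus \multiset{c_i, c_j}) \sqcup \multiset{c_i + c_j}$, where $C = \multiset{c_1, \dots, c_n}$ denotes the coefficient multiset of $f$. Thus $\deck f$ is captured by $c_0$ (common to all cards) together with the multiset $\multiset{C_I : I \in \couples}$, and reconstructing $f$ up to $\equiv$ reduces to recovering $C$ from this ``pair-collapsing'' deck.

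First I would prove \emph{recognizability}: any reconstruction $g$ of an affine $n$-ary function with $n \geq \max(q,3) + 1$ is itself affine. The idea is to exploit a local characterization of affineness in terms of identification minors (a function is affine iff every $g_I$ is affine and their constant/coefficient data fit together consistently), which the hypomorphism transports from $f$ to $g$; the arity bound ensures enough cards for such a compatibility argument. Granted recognizability, it remains to establish \emph{weak reconstructibility} inside the affine class, i.e., to recover $C$ from $\multiset{C_I : I \in \couples}$. For this I would analyze the power sums $P_r(C) := \sum_{c \in C} c^r$, using
\[
P_r(C_I) - P_r(C) = (c_i + c_j)^r - c_i^r - c_j^r,
\]
summing over $I \in \couples$ to express the elementary symmetric functions of $C$ in terms of quantities readable from the deck. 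Since a multiset of $n$ field elements is determined by its first $n$ elementary symmetric functions, this closes the argument in characteristics where enough of the relevant power sums are available.

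The main obstacle I anticipate is the low-characteristic case, above all $\operatorname{char} G = 2$, where $(a + b)^2 = a^2 + b^2$ collapses the key Newton-type identities and many symmetric invariants become degenerate. Here I would rely on the arity hypothesis $n \geq q + 1$, which by pigeonhole forces $C$ to contain a repeated element: cards arising from pairs $\{i, j\}$ with $c_i = c_j$ have the distinctive form $(C \setminus \multiset{c, c}) \sqcup \multiset{2c}$ (collapsing to $\multiset{0}$ in characteristic $2$), and such cards serve as anchors that, combined with the remaining cards, bootstrap the reconstruction of $C$. Verifying that this anchor-based decoding covers every configuration, and that the combinatorial collisions in small characteristic do not introduce genuine ambiguity, is where I expect the delicate work of the proof to lie.
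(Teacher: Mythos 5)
This statement is imported verbatim from \cite{LehtonenLinear} (Thm.~4.7); the present paper gives no proof of it, so your proposal can only be measured against the argument in that reference. Your opening reduction is exactly the one used there: an $n$-ary affine function is determined up to $\equiv$ by its constant term and the multiset $C$ of its coefficients, the minor $f_{\{i,j\}}$ replaces $\multiset{c_i,c_j}$ by $\multiset{c_i+c_j}$, and the problem splits into recognizability of the affine class plus reconstructibility of multisets over the additive group from their pair-contraction decks. That framing is correct, and the recognizability half, while only sketched, is in the right spirit (the reference establishes it via a Świerczkowski-type lemma: for large enough arity, a function all of whose identification minors are affine is itself affine).

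The genuine gap is in the central combinatorial step, where you propose to recover $C$ from $\multiset{C_I : I \in \couples}$ via power sums and Newton-type identities. Over a finite field of order $q$ this cannot work in the regime of the theorem: since $c^q = c$ for all $c \in G$, there are at most $q-1$ independent power sums, while $C$ has $n \geq q+1$ elements, so the power sums provably do not determine $C$ (already over $\mathbb{F}_2$ the multisets $\multiset{0,0,1}$ and $\multiset{1,1,1}$ share all $P_r$). Moreover, Newton's identities relating $P_r$ to the elementary symmetric functions are not invertible in characteristic $p \leq n$, which always holds here since $p \leq q \leq n-1$; and even the first step, extracting $P_1(C)$ from $\sum_I P_1(C_I) = \binom{n}{2} P_1(C)$, fails whenever $p \mid \binom{n}{2}$. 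You correctly sense that small characteristic is the danger, but your proposed repair --- using pairs with $c_i = c_j$ as ``anchors'' --- is not an argument, and it is precisely where the entire content of the theorem lies. The actual proof in \cite{LehtonenLinear} does not use symmetric-function invariants at all: it is a direct counting argument on multisets over a commutative groupoid, determining the multiplicity of each element of $G$ in $C$ from the multiplicities of the various cards in the deck, with the bound $n \geq \max(q,3)+1$ entering through a pigeonhole step guaranteeing a repeated coefficient. As written, your proof of weak reconstructibility does not go through and would need to be replaced by an argument of that combinatorial kind.
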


Note that the deck of a function is defined as the \textbf{multiset} of its cards. Considering instead the \textbf{set} of cards, we obtain a variant of the reconstruction problem, the so-called set-reconstruction problem.
The \emph{set-deck} of a function $f \colon A^n \to B$, denoted $\setdeck f$, is the set $\{f_I / {\equiv} : I \in \couples\}$ of the cards of $f$.
A function $f' \colon A^n \to B$ is a \emph{set-reconstruction} of $f$ if $\setdeck f = \setdeck f'$.
A function is \emph{set-reconstructible} if it is equivalent to all of its set-reconstructions.
In an analogous way, we can define \emph{set-reconstructibility,} \emph{weak set-reconstructibility} and \emph{set-recognizability} for classes of functions.

\begin{remark}
Set-reconstructibility implies reconstructibility.
The converse does not hold in general.
\end{remark}

We say that a function $f \colon A^n \to B$ has a \emph{unique identification minor} if $f_I \equiv f_J$ for all $I, J \in \couples$.
The following three conditions are clearly equivalent:
\begin{enumerate}[\rm (i)]
\item $f$ is a function with a unique identification minor, and $g$ is the unique identification minor of $f$.
\item $\deck f = \multiset{g : I \in \couples}$.
\item $\setdeck f = \{g\}$.
\end{enumerate}

\begin{lemma}
\label{lem:recsetrec}
A function with a unique identification minor is reconstructible if and only if it is set-reconstructible.
\end{lemma}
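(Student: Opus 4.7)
The plan is to prove the two implications separately, noting that one direction is essentially the remark already stated.

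For the easy direction, I would observe that set-reconstructibility always implies reconstructibility (this is the remark preceding the lemma). Explicitly, if $\deck f = \deck f'$ then $\setdeck f = \setdeck f'$, so every reconstruction is a set-reconstruction; hence set-reconstructibility implies reconstructibility with no hypothesis on uniqueness of the identification minor.

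For the harder direction, suppose $f \colon A^n \to B$ has a unique identification minor $g$ and is reconstructible. I want to show $f$ is set-reconstructible. Let $f' \colon A^n \to B$ be any set-reconstruction of $f$. Then, using the equivalence of the three conditions recorded just before the lemma, $\setdeck f' = \setdeck f = \{g\}$ forces $f'$ itself to have a unique identification minor, namely $g$, and therefore
\[
\deck f' = \multiset{g : I \in \couples} = \deck f.
\]
Hence $f'$ is in fact a reconstruction of $f$ in the multiset sense, and by the assumed reconstructibility of $f$ we conclude $f \equiv f'$. Thus every set-reconstruction of $f$ is equivalent to $f$, which is exactly set-reconstructibility.

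There is no real obstacle here: the whole content is the observation that when the set-deck is a singleton, knowing the set-deck already determines the multiset-deck (because all $\binom{n}{2}$ cards must coincide with that single element), so the two notions of reconstruction collapse for such functions.
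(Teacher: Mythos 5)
Your proof is correct and takes essentially the same route as the paper: the paper's one-line argument is precisely that for a function with a unique identification minor, reconstructions and set-reconstructions coincide, which is the content of your key step (a singleton set-deck determines the full deck since all $\binom{n}{2}$ cards must equal that one element). Your version just spells out the two implications that the paper leaves implicit.
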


\begin{proof}
If $f$ is a function with a unique identification minor, then $f'$ is a reconstruction of $f$ if and only if $f'$ is a set-reconstruction of $f$. The claim thus follows.
\end{proof}

\section{Reconstruction problem for clones}
\label{sec:clones}

If $f \colon B^n \to C$ and $g_1, \dots, g_n \colon A^m \to B$, then the \emph{composition} of $f$ with 
$g_1, \dots, g_n$ is the function $f(g_1, \dots, g_n) \colon A^m \to C$ given by the rule
\[
f(g_1, \dots, g_n)(\vect{a}) =
f \bigl( g_1(\vect{a}), \dots, g_n(\vect{a}) \bigr),
\]
for all $\vect{a} \in A^m$.

For integers $n$ and $i$ such that $1 \leq i \leq n$, the $i$-th $n$-ary \emph{projection} on $A$ is the operation $\pr_i^{(n)} \colon A^n \to A$, $(a_1, \dots, a_n) \mapsto a_i$ for all $(a_1, \dots, a_n) \in A^n$. 

A \emph{clone} on $A$ is a class of operations on $A$ that contains all projections on $A$ and is closed under functional composition. Trivial examples of clones are the set $\cl{O}_A$ of all operations on $A$ and the set of all projections on $A$.

The clones on the two-element set $\{0, 1\}$ were completely described by Post~\cite{Post},
and they constitute a countably infinite lattice, known as Post's lattice (see Figure~\ref{fig:PostsLattice}).

\begin{figure}
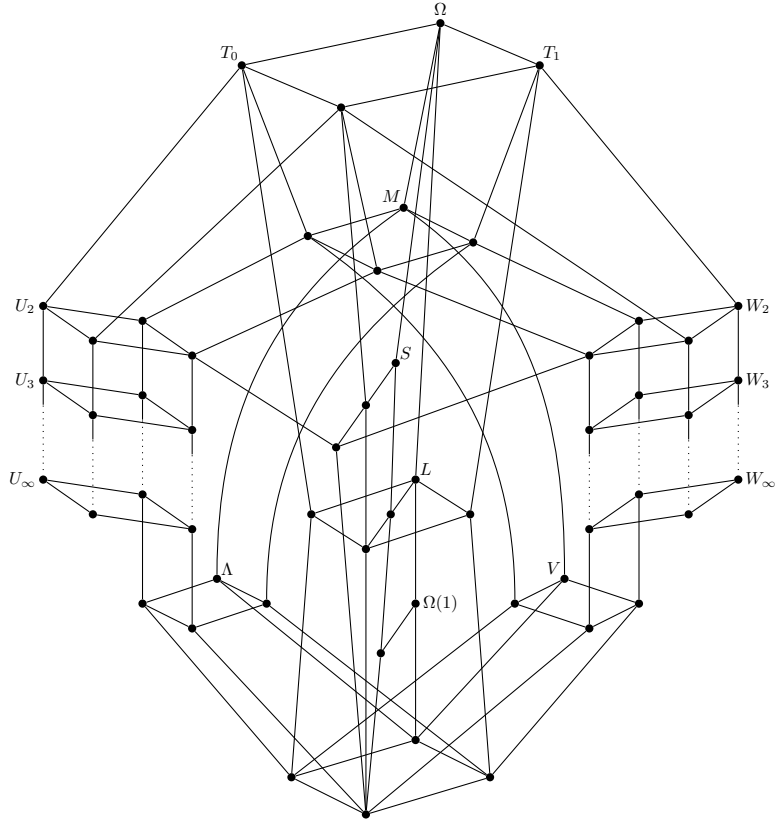

\PostsLattice{0.66}
\caption{Post's lattice.}
\label{fig:PostsLattice}
\end{figure}

In the sequel, we will make specific reference to the following clones of Boolean functions:
\begin{itemize}
\item the clone $\Lambda$ of polynomial operations of the two-element meet-semilattice,
\item the clone $V$ of polynomial operations of the two-element join-semilattice,
\item the clone $L$ of polynomial operations of the group of addition modulo $2$.
\end{itemize}

In~\cite{CLSmonotone}, we have classified the clones of Boolean functions in regard to constructibility.
It is remarkable that the clones that are not reconstructible are not even weakly reconstructible; moreover, such clones contain an infinity of functions that cannot be reconstructed even if we knew the two-element subset of $\nset{n}$ giving rise to each one of its cards.

\begin{theorem}[{\cite[Thm.~5.4]{CLSmonotone}}]
\label{thm:recPost}
Let $\mathcal{C}$ be a clone on $\{0, 1\}$.
If $\mathcal{C}$ is included in $\Lambda$, $V$ or $L$, then $\mathcal{C}^{(\geq 4)}$ is reconstructible.
Otherwise, $\mathcal{C}^{(\geq n)}$ is not weakly reconstructible for any $n \geq 1$, and $\mathcal{C}$ contains pairs of nonequivalent strongly hypomorphic functions of arbitrarily high arity.
\end{theorem}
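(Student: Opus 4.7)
The statement partitions the Post classes into two regimes according to whether $\mathcal{C}$ is contained in one of $\Lambda$, $V$, $L$ or not, and the proof naturally splits accordingly. My plan is to handle the positive direction by reducing to the known reconstruction results for totally symmetric and affine functions, and to handle the negative direction by exhibiting, for each ``bad'' clone, pairs of nonequivalent strongly hypomorphic functions of arbitrarily high arity.

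For the positive direction, the key observation is that every member of $\Lambda$, $V$, or $L$ is, up to equivalence, totally symmetric: operations in $\Lambda$ are conjunctions $\bigwedge_{i \in S} x_i$ or constants, those in $V$ are disjunctions $\bigvee_{i \in S} x_i$ or constants, and those in $L$ are affine forms $c_0 + \sum_{i \in S} x_i$ over $\mathbb{F}_2$; in each case the essential arguments are freely interchangeable. For $\mathcal{C} \subseteq \Lambda$ or $\mathcal{C} \subseteq V$, Theorem~\ref{thm:totsymm} with $k = 2$ shows that each $f \in \mathcal{C}^{(\geq 4)}$ is reconstructible as an individual function, so any reconstruction must be equivalent to $f$ and hence a member of $\mathcal{C}$. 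For $\mathcal{C} \subseteq L$ the same conclusion follows from Theorem~\ref{thm:affine} with $q = 2$ (so $\max(q,3)+1 = 4$). Putting these together yields the reconstructibility of $\mathcal{C}^{(\geq 4)}$.

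For the negative direction, for every clone $\mathcal{C}$ not contained in any of $\Lambda$, $V$, $L$ and every sufficiently large arity $n$, we must exhibit $f, f' \in \mathcal{C}^{(n)}$ with $f \not\equiv f'$ yet $f_I \equiv f'_I$ for all $I \in \couples$. A reasonable strategy is to use Post's lattice to reduce to a short list of ``minimal'' bad clones---those lying immediately above the union $\Lambda \cup V \cup L$, such as the clone generated by the ternary majority operation, together with analogous minimal witnesses---and to design a construction within each such minimal clone. The construction itself should be sufficiently ``cyclic'' or ``regular'' that identifying any two indices collapses enough structure to force $f_I \equiv f'_I$, while retaining enough global rigidity to guarantee $f \not\equiv f'$. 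Because clone containment is monotone, any pair built inside such a minimal bad clone automatically lies in every larger $\mathcal{C}$ above it.

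The main obstacle clearly lies in the negative direction: producing a pair of nonequivalent functions whose every identification minor coincides up to equivalence requires a delicate balance between rigidity and homogeneity, and one must ensure that the constructed pair actually belongs to the target clone. The most technical part will therefore be the tailored constructions for the minimal Post classes outside $\Lambda \cup V \cup L$ and the verification of strong hypomorphism by case analysis over the pairs $I = \{i,j\}$.
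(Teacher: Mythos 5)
First, note that the paper does not prove this theorem at all: it is imported verbatim as Theorem~5.4 of \cite{CLSmonotone}, so there is no in-paper argument to match your proposal against. Judged on its own merits, your proposal has two genuine gaps. The more serious one is the negative direction, which is the actual content of the theorem: you describe a \emph{plan} (reduce to minimal clones above $\Lambda \cup V \cup L$, build ``cyclic'' pairs, check all $I \in \couples$) but give no construction, do not identify which minimal clones must be treated (e.g.\ the clones generated by majority, by $x \wedge (y \vee z)$, the clones $U_\infty$, $W_\infty$, $\Omega(1)$, etc.\ in Post's lattice), and do not verify strong hypomorphism or nonequivalence for a single pair. Producing nonequivalent strongly hypomorphic functions of arbitrarily high arity inside each such clone is precisely the hard combinatorial work of \cite{CLSmonotone} (via hypomorphic Sperner systems), and none of it is present here.

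The positive direction also has a gap. Your claim that every member of $\Lambda$, $V$ or $L$ is ``up to equivalence totally symmetric'' does not let you apply Theorem~\ref{thm:totsymm}: that theorem concerns functions $f \colon A^n \to B$ that are totally symmetric \emph{as $n$-ary functions}, and a member of $\Lambda^{(n)}$ such as $(x_1,\dots,x_n) \mapsto x_1 \wedge x_2$ with $2 \leq r \leq n-1$ essential arguments is not totally symmetric; its deck lives in arity $n-1$ and cannot be read off the deck of the equivalent binary function. This intermediate case is exactly where the work lies: in the analogous Proposition~\ref{prop:setrecclones} of the present paper it consumes Willard's theorem (Theorem~\ref{thm:Willard2.6}), the ad hoc Lemma~\ref{lem:4-aryBoolean} for $n = 4$, and Corollary~\ref{cor:GSL} for the essentially unary case. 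Your appeal to Theorem~\ref{thm:affine} for $L$ is closer to working (if ``affine functions of arity $\geq 4$'' is read as including those with inessential arguments), but for $\Lambda$ and $V$ the cited symmetry results simply do not cover the functions with inessential arguments, so even the easy half of the dichotomy is not established as written.
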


Our aim is to make this dichotomy even more contrasting. We will show that the clones that are reconstructible are actually set-reconstructible.

Let $\circ$ be a binary operation on $A$.
For a positive integer $\ell$, define the operation $\circ_\ell$ of arity $\ell$ by the following recursion:
$\circ_1 := \id_A$,
and for $\ell \geq 2$, we let
\[
\circ_{\ell}(a_1, \dots, a_\ell) = \circ_{\ell - 1}(a_1, \dots, a_{\ell - 1}) \circ a_\ell
\]
for all $a_1, \dots, a_\ell \in A$.
For integers $n$ and $\ell$ such that $1 \leq \ell \leq n$, define the operation $\circ^{(n)}_\ell \colon A^n \to A$ by the rule $\circ^{(n)}_\ell(a_1, \dots, a_n) = \circ_\ell(a_1, \dots, a_\ell)$ for all $a_1, \dots, a_n \in A$.
It is easy to see that if $(A; \circ)$ is a semilattice (i.e., an associative, commutative and idempotent groupoid) or a Boolean group (i.e., a group in which $x + x = 0$ holds for every $x$), then the set of all nonconstant term operations of $(A; {\circ})$ equals $\{f : \text{$f \equiv \circ^{(n)}_\ell$ for some $\ell$, $n$}\}$. In the case of a Boolean group, the constant function $0$ is also a term operation of $(A; \circ)$.

\begin{remark}
The clones $\Lambda$, $V$ and $L$ are of the form $\Clo(\circ)$ for a commutative semigroup $(A; \circ)$.
Namely, $\Lambda$ and $V$ correspond to the case when $(A; \circ)$ is a semilattice, and $L$ corresponds to the case when $(A; \circ)$ is a Boolean group.
\end{remark}

\begin{lemma}
\label{lem:4-aryBoolean}
Assume that $(\{0,1\}; {\circ})$ is a semilattice or a Boolean group.
For $r \in \{2, 3\}$, no set-reconstruction of $\circ^{(4)}_r$ depends on all of its arguments.
\end{lemma}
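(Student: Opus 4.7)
The plan is to perform a case analysis over the four pairs $(\circ, r)$ with $\circ$ a two-element semilattice ($\land$ or $\lor$) or the Boolean group operation $+$, and $r \in \{2, 3\}$; the two semilattice choices are dual under the involution $x \mapsto 1 - x$, so it suffices to treat $\circ = \land$. I would first compute $\setdeck \circ^{(4)}_r$ explicitly in each of the three remaining cases. For the semilattice ($\circ = \land$), $\setdeck \circ^{(4)}_2 = \{[\pr_1^{(3)}], [y_1 \land y_2]\}$ and $\setdeck \circ^{(4)}_3 = \{[y_1 \land y_2], [y_1 \land y_2 \land y_3]\}$; for the Boolean group ($\circ = +$), $\setdeck \circ^{(4)}_2 = \{[\mathbf{0}^{(3)}], [y_1 + y_2]\}$ and $\setdeck \circ^{(4)}_3 = \{[\pr_1^{(3)}], [y_1 + y_2 + y_3]\}$. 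In each case the set-deck consists of exactly two equivalence classes of 3-ary Boolean functions whose algebraic normal forms are severely restricted: single monomials, or specific sums of linear terms.

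The main tool is to write any candidate $f' \colon \{0,1\}^4 \to \{0,1\}$ in its algebraic normal form over $\mathbb{F}_2$, $f'(\vect{x}) = \sum_{S \subseteq \nset{4}} a_S \prod_{i \in S} x_i$. Since the identification minor $f'_{\{i,j\}}$ (with $i < j$) arises from the substitution $x_j \mapsto x_i$, which collapses monomials via $x_i^2 = x_i$, its ANF coefficients are explicit $\mathbb{F}_2$-sums of the $a_S$'s. Requiring each of the six minors to realize one of the two prescribed classes then yields a system of equations and case distinctions on the $a_S$'s. Universally, $a_\emptyset = 0$ (no minor has a constant term), and the vanishing of the degree-three ANF coefficient of each minor produces $a_{\nset{4}} = 0$ together with the equality $a_{\nset{4}\setminus\{1\}} = a_{\nset{4}\setminus\{2\}} = a_{\nset{4}\setminus\{3\}} = a_{\nset{4}\setminus\{4\}}$.

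In the Boolean-group cases the minors have no quadratic terms either, and the same procedure propagates $a_S = 0$ for all $|S| \geq 2$; hence $f'$ is affine, and the assumption $a_1 = a_2 = a_3 = a_4 = 1$ (equivalent to all four arguments being essential) forces every minor to have exactly two nonzero linear coefficients, matching none of the allowed classes. In the semilattice $r = 3$ case the minors have no constant or linear terms, so $a_S = 0$ for all $|S| \leq 2$; then $f'$ is a combination of cubic and quartic monomials, and a brief enumeration on the pattern of nonzero triple-coefficients shows that the only set-reconstructions are of the form $f' = \prod_{k \neq m} x_k$ for some $m \in \nset{4}$, which has $x_m$ inessential. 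The main obstacle is the semilattice $r = 2$ case, in which $f'$ may retain nonzero ANF pieces at every degree. Here I would exploit the $S_4$-permutation symmetry to split on the class assignment of a representative minor (say $f'_{\{3,4\}}$, which is either a projection or a 2-argument conjunction, with further subcases on the distinguished argument) and propagate the constraints imposed by the remaining five minors. In each branch, the constraints collectively force every ANF coefficient of $f'$ of degree $\geq 3$ to vanish and leave at most one nonzero coefficient among the degree-$1$ and degree-$2$ parts; hence $f'$ is equivalent to a projection, to a 2-argument conjunction, or to the constant $0$, each of which has at least two inessential arguments, contradicting the assumption that $f'$ depends on all four arguments.
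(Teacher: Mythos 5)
Your treatment of the Boolean-group cases is essentially the paper's own proof: the paper also passes to the Zhegalkin polynomial of a putative reconstruction $g$ depending on all four arguments and kills the coefficients degree by degree (degree $2$ present $\Rightarrow$ some $g_{\{u,v\}}$ has a quadratic monomial; degree $3$ present $\Rightarrow$ some $g_{\{s,t\}}$ does; only $x_1x_2x_3x_4$ plus linear terms $\Rightarrow$ every minor has a cubic monomial), leaving the affine case. For the semilattice cases you genuinely diverge: the paper does not compute ANFs at all, but instead observes that every minor of $\circ^{(4)}_r$ is order-preserving, invokes a lemma that a non-order-preserving function of arity $\geq \card{A}+2$ has a non-order-preserving identification minor, and then reads the answer off the published tables of monotone $4$-ary Boolean functions in the appendix of \cite{CLSmonotone}. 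Your ANF route is self-contained where the paper outsources, at the price of the case analysis you only sketch; the paper's route is shorter but leans on external results, and it actually yields the stronger conclusion that $\circ^{(4)}_r$ is set-reconstructible in the semilattice case.

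Two concrete problems. First, in the residual affine subcase $p = x_1+x_2+x_3+x_4$ you assert that each minor, being $\equiv y_1+y_2$, matches ``none of the allowed classes''; that is false for the Boolean group with $r=2$, where $\setdeck \circ^{(4)}_2 = \{[y_1+y_2],[\mathbf{0}]\}$ and $[y_1+y_2]$ is allowed. The correct (and easily substituted) argument, which is the one the paper uses, is that $x_1+x_2+x_3+x_4$ is totally symmetric and so has a singleton set-deck, which cannot equal a two-element set-deck. Second, the semilattice $r=2$ case --- the one you yourself identify as the main obstacle --- is left as an assertion that ``the constraints collectively force'' the conclusion; since a minor there may be either a projection or a conjunction, the six minors impose heterogeneous conditions and the branch-by-branch propagation is exactly the nontrivial content. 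As written this is a plan rather than a proof of that case; you would need to either carry out the enumeration (feasible, and the tables cited in the paper confirm the outcome) or adopt the paper's monotonicity shortcut.
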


\begin{proof}
Consider first the case that $\circ$ is a semilattice operation.
Then $\setdeck \circ^{(4)}_r = \{\circ_r, \circ_{r-1}\}$.
Observe that $\circ_r$ is order-preserving and so are all of its minors.
If $n \geq \card{A} + 2$ and $g \colon A^n \to A$ is a function that is not order-preserving, then $g$ has an identification minor that is not order-preserving (see Example~3.17 in~\cite{LehtonenSymmetric}).
Therefore, any set-reconstruction of $\circ_r$ must be order-preserving.
We can read off of the tables in Appendix~B of \cite{CLSmonotone} (and it is not difficult to verify) that the only monotone Boolean function of arity $4$ that has the same set-deck as $\circ^{(4)}_r$ is $\circ^{(4)}_r$ itself.
We conclude that $\circ^{(4)}_r$ is set-reconstructible.

Consider then the case that $\circ$ is a Boolean group operation.
Then $\setdeck \circ^{(4)}_r = \{\circ_r, \circ_{r-2}\}$, where $\circ_0$ stands for the constant function $0$.
Let $g$ be a reconstruction of $f$.
Suppose, on the contrary, that $g$ depends on all of its arguments.
Let $p$ be the unique multilinear polynomial representation (the Zhegalkin polynomial) of $g$.
Obviously $g$ is not a constant function, so $p$ has some nonconstant monomials.
If $p$ has no monomial of degree greater than $1$, then $p = x_1 + x_2 + x_3 + x_4$. Consequently $g$ is totally symmetric and hence has a unique identification minor. This is not possible.

If $p$ has a monomial of degree at least $3$, then it is easy to verify that there is $I \in \couples$ such that the Zhegalkin polynomial of $g_I$ has a monomial of degree at least $2$ and is hence not equivalent to either of $\circ_r$ and $\circ_{r-2}$.
Namely, assume that $\{s, t, u, v\} = \nset{4}$.
If $x_s x_t$ is a monomial of $p$, then $g_{\{u, v\}}$ has a monomial of degree $2$.
We may thus assume that there are no monomials of degree $2$ in $p$.
If
$x_s x_t x_u$ is the only monomial of degree $3$ in $p$, or
$x_s x_t x_u$ and $x_s x_t x_u$ are the only monomials of degree $3$ in $p$, or
$x_s x_t x_u$, $x_s x_t x_v$ and $x_s x_u x_v$ are the only monomials of degree $3$ in $p$, or
every one of $x_s x_t x_u$, $x_s x_t x_v$, $x_s x_u x_v$ and $x_t x_u x_v$ is a monomial in $p$,
then the Zhegalkin polynomial of $g_{\{s, t\}}$ has a monomial of degree $2$.
We may thus assume that $p$ does not have monomials of degree $3$.

We are left with the case that $p$ comprises the monomial $x_1 x_2 x_3 x_4$ and some monomials of degree $1$. It is clear that the Zhegalkin polynomial of any identification minor of $g$ has a monomial of degree $3$.

We have reached a contradiction, and the proof is complete.
\end{proof}

We recall a useful result due to Willard.

\begin{theorem}[{Willard~\cite[Thm.~2.5]{Willard}}]
\label{thm:Willard2.6}
Assume that $k = \card{A}$ and $n \geq \max(k, 3) + 2$, and let $f \colon A^n \to B$ be a function that depends on all of its arguments.
If every $(n - 1)$-ary minor of $f$ is totally symmetric, then $f$ is determined by either $\supp$ or $\oddsupp$ and is hence totally symmetric.
\end{theorem}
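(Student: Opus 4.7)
The plan is to prove the statement in two phases: first establish that $f$ itself is totally symmetric, and then use the constraints imposed by the totally symmetric identification minors to force $f$ to factor through either $\supp$ or $\oddsupp$.

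For the first phase, I would fix an arbitrary transposition $\tau = (p, q)$ of $\nset{n}$ and an arbitrary $\vect{a} \in A^n$ and show $f(\vect{a}) = f(\vect{a}\tau)$; since transpositions generate the symmetric group, this gives total symmetry. Because $n - 2 \geq k = \card{A}$, the pigeonhole principle applied to the $n - 2$ coordinates in $\nset{n} \setminus \{p, q\}$ furnishes indices $i \neq j$ there with $a_i = a_j$. Writing $\vect{a} = \vect{b}\,\delta_{\{i,j\}}$ for some $\vect{b} \in A^{n-1}$, and noting that $\tau$ fixes $i$ and $j$, we obtain $\vect{a}\tau = (\vect{b}\tau')\,\delta_{\{i,j\}}$ where $\tau'$ is the permutation of $\nset{n-1}$ induced by $\tau$ on the quotient of $\nset{n}$ identifying $i$ with $j$. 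Total symmetry of $f_{\{i,j\}}$ then yields $f(\vect{a}) = f_{\{i,j\}}(\vect{b}) = f_{\{i,j\}}(\vect{b}\tau') = f(\vect{a}\tau)$.

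For the second phase, total symmetry of $f$ lets us write $f$ as a function $\tilde f$ of the multiset of inputs. The next step is to translate the total symmetry of each identification minor $f_I$ into a multiset-level constraint: for any multiset $\nu$ of size $n - 1$ and any two elements $x, y$ in the support of $\nu$, augmenting $\nu$ by $x$ and augmenting $\nu$ by $y$ produce multisets with the same $\tilde f$-value. This follows because $f_I(\vect{b})$ equals $\tilde f$ applied to $\mu(\vect{b})$ with $b_{\min I}$ added, and permuting $\vect{b}$ makes the added entry range over all of $\supp(\mu(\vect{b}))$. The constraint enables a ``shift'' move on size-$n$ multisets, namely replacing a copy of an element of multiplicity at least $2$ by a copy of some $y$ in the support, which preserves $\tilde f$. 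Since $\card{\supp(\mu)} \leq k \leq n - 2$, pigeonhole guarantees that some element always has multiplicity at least $2$, so shifts are available for every multiset.

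The concluding step is to show that iterating shifts forces $\tilde f$ to depend only on $\supp$ or $\oddsupp$. In the generic case, shifts connect any two multisets sharing the same support, yielding $\supp$-determination; otherwise, parity obstructions to certain shifts (together with the structural information coming from the non-surjective $(n-1)$-ary minors being totally symmetric) leave only the weaker conclusion that $\tilde f$ is determined by $\oddsupp$. I expect the main obstacle to be precisely this final classification: disentangling the regimes in which shifts conflate full support classes from those in which they respect a parity, and carefully handling the boundary cases such as multisets with singleton support, where no shift is available without changing the support. In either outcome, the conclusion ``determined by $\supp$ or $\oddsupp$'' follows, and hence $f$ is in particular totally symmetric.
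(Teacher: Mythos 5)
The paper offers no proof of this statement at all---it is quoted directly from Willard's paper as an external result---so there is no internal argument to compare yours against; I can only assess your attempt on its own merits, and it has a genuine gap at its crux. Your Phase 1 (pigeonhole on the $n-2$ coordinates outside $\{p,q\}$, then transferring symmetry from $f_{\{i,j\}}$ up to $f$) and your Phase 2 derivation of the multiset constraint and the shift move are correct \emph{under the literal reading} of the hypothesis, namely that each $(n-1)$-ary minor is invariant under all permutations of all $n-1$ of its arguments. But the concluding classification---the only part of the theorem that distinguishes $\supp$ from $\oddsupp$---is precisely the step you leave open (``I expect the main obstacle to be precisely this final classification''). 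Worse, under your reading there is nothing left to classify: since $\card{\supp(\mu)} \le k \le n-2 < n$, any two size-$n$ multisets $\mu,\mu'$ with the same support are joined by your shifts (choose $x$ with $\mu(x) > \mu'(x) \ge 1$, hence $\mu(x)\ge 2$, and $y$ with $\mu(y)<\mu'(y)$, shift, and induct on $\sum_z \lvert \mu(z)-\mu'(z)\rvert$), so $\tilde f$ would always be determined by $\supp$ and the $\oddsupp$ alternative would be vacuous.

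That vacuity is the warning sign that you have proved a different (and in context useless) statement. The hypothesis has to be read as ``every $(n-1)$-ary minor is \emph{equivalent to} a totally symmetric function,'' i.e., symmetric in its essential arguments. That is the reading under which the $\oddsupp$ case actually occurs: the $n$-ary parity function over $\{0,1\}$ depends on all its arguments, each identification minor equals $\sum_{m\ne\min I} b_m$, which is symmetric only in its $n-2$ essential arguments, and the function is determined by $\oddsupp$ but not by $\supp$. It is also the only reading under which the theorem can be invoked in Proposition~\ref{prop:setrecclones}, where the identification minors of $g$ are merely equivalent to $\circ_r$ or $\circ_s$ and, for $r<n-1$, are not literally symmetric in all $n-1$ arguments. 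Under this weaker hypothesis both of your phases collapse: a function symmetric only in its essential arguments need not satisfy $g(\vect{b}) = g(\vect{b}\tau')$ when $\tau'$ exchanges an essential with an inessential position, so neither the identity $f(\vect{a}) = f(\vect{a}\tau)$ in Phase 1 nor the constraint $\tilde f(\nu + x) = \tilde f(\nu + y)$ in Phase 2 follows. The real content of Willard's theorem lies in handling minors that lose essential arguments, and that analysis is absent from your sketch.
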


The following theorem is a special case of Theorem~7 in~\cite{CouLehGSL}.

\begin{theorem}[{\cite{CouLehGSL}}]
\label{thm:GSL}
Assume that $k = \card{A}$ and $n \geq \max(k, 3) + 1$, and let $f \colon A^n \to B$.
Then all identification minors of $f$ are essentially unary if and only if $f$ is essentially unary.
Furthermore, in this case, the identification minors of $f$ are equivalent to $f$.
\end{theorem}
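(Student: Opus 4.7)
The reverse implication is immediate: if $f(\vect a)=h(a_s)$ is essentially unary, then $f_I(\vect a)=h(a_{\delta_I(s)})$ is also essentially unary, and both encode the same unary function $h$ padded with inessential arguments, so $f_I\equiv f$. This observation simultaneously delivers the ``furthermore'' clause once the forward implication is established.

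For the forward direction, suppose each $f_I$ is essentially unary, so that $f_I(\vect x)=h_I(x_{\alpha_I})$ for every $I=\{p,q\}\in\couples$ (with $p<q$), some unary $h_I\colon A\to B$, and some $\alpha_I\in\nset{n-1}$. Let $\pi_I\colon\nset{n-1}\to\nset n$ be the order-preserving injection skipping $q$, so that $\delta_I\circ\pi_I=\id_{\nset{n-1}}$, and set $s_I:=\pi_I(\alpha_I)\in\nset n\setminus\{q\}$. A direct computation gives
\[
f(\vect a)=h_I(a_{s_I})\quad\text{whenever}\quad a_p=a_q.
\]
Since $n\ge\max(k,3)+1\ge k+1$, the pigeonhole principle guarantees that every $\vect a\in A^n$ has a repeated coordinate, so the family $\{(h_I,s_I)\}_{I\in\couples}$ determines $f$ on the whole of $A^n$.

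The heart of the argument is to unify these local descriptions into a single pair $(h,s)$, thereby yielding $f(\vect a)=h(a_s)$ for all $\vect a$ and hence essential unariness of $f$. I would compare pairs of identifications: for disjoint $I,J\in\couples$ (which exist since $n\ge 4$), any tuple simultaneously satisfying $a_{p_I}=a_{q_I}$ and $a_{p_J}=a_{q_J}$ yields $h_I(a_{s_I})=h_J(a_{s_J})$. Varying the free coordinates, either $s_I$ and $s_J$ point to the same coordinate of $\vect a$ (forcing $h_I=h_J$ on the image of that coordinate) or the inputs $a_{s_I}, a_{s_J}$ can be varied independently, forcing both $h_I$ and $h_J$ to be constant with equal value. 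Iterating across all pairs, and invoking auxiliary pairs to resolve the ambiguous case $\alpha_I=p$ (where $s_I=p$, but $a_{s_I}=a_q$ as well, so the correct canonical position in $\nset n$ needs to be pinned down), produces a consistent global $(h,s)$. The main obstacle is precisely this unification: the ambiguity of the ``merged'' position requires careful bookkeeping, and the bound $n\ge\max(k,3)+1$ is essential both for the pigeonhole covering of $A^n$ and for supplying enough disjoint pairs with genuinely free coordinates to drive the variation arguments.
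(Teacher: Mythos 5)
This theorem is imported: the paper offers no proof of its own, citing it as a special case of Theorem~7 of \cite{CouLehGSL}, so your attempt can only be judged against a complete argument rather than against anything in the text. Your reverse implication and the ``furthermore'' clause are correct and complete, and your forward direction starts the right way: each hypothesis $f_I(\vect x)=h_I(x_{\alpha_I})$ pins down $f$ on the hyperplane $\{\vect a: a_p=a_q\}$ as $h_I(a_{s_I})$, and the pigeonhole bound $n\ge k+1$ shows these hyperplanes cover $A^n$. This is indeed the standard skeleton of Świerczkowski-type arguments.

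The genuine gap is that the step you yourself call ``the heart of the argument'' is announced rather than executed, and it is not routine. First, the dichotomy you state for disjoint $I,J$ should be sharpened: the branch ``both $h_I$ and $h_J$ constant'' directly contradicts the hypothesis that $f_I$ is essentially unary, so what the variation argument actually yields is that the essential position of $f$ on $H_I=\{\vect a:a_{p_I}=a_{q_I}\}$ --- which is either a single index $t_I\notin I$ or the whole merged block $I$ --- must meet the corresponding set for $J$, and that $h_I=h_J$. But this only gives a family of sets $T_I$ of size at most $2$ that pairwise intersect \emph{for disjoint} $I,J$; you still must (i) handle pairs $I,J$ with $|I\cap J|=1$, for which no constraint has been derived, (ii) rule out a pairwise-intersecting but uncentered configuration (three $2$-sets forming a triangle have no common element), (iii) deal with $n=4$, where disjoint $I,J$ leave no free coordinates outside $I\cup J$ so the ``vary the free coordinates'' phrasing does not literally apply, and (iv) carry out the final globalization: when $n=k+1$ a tuple may lie on exactly one hyperplane $H_I$, so one must verify that the single surviving index $s$ satisfies $a_{s_I}=a_s$ for \emph{every} $I$, i.e.\ that $s\in T_I$ for all $I\in\couples$. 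None of this is impossible --- it is precisely the content of the cited lemma --- but as written the proposal defers exactly the part that constitutes the proof.
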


\begin{corollary}
\label{cor:GSL}
Assume that $k = \card{A}$ and $n \geq \max(k, 3) + 1$, and let $f \colon A^n \to B$.
If $f$ is essentially unary, then $f$ is reconstructible.
\end{corollary}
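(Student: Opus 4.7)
The plan is to derive the corollary as a direct consequence of Theorem~\ref{thm:GSL}, which does almost all the work. The key observation is that, under the arity hypothesis, Theorem~\ref{thm:GSL} is a biconditional with an extra conclusion attached: essentially unary functions have only essentially unary identification minors, and in fact every such minor is equivalent to the original function.

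First, I fix an essentially unary $f \colon A^n \to B$ with $n \geq \max(k,3)+1$, and let $f' \colon A^n \to B$ be an arbitrary reconstruction of $f$, so that $\deck f = \deck f'$. By Theorem~\ref{thm:GSL}, every card $f_I/{\equiv}$ in $\deck f$ is the equivalence class of an essentially unary function, and moreover $f_I \equiv f$ for every $I \in \couples[\nset{n}]$.

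Since $\deck f' = \deck f$, each card $f'_J / {\equiv}$ of $f'$ coincides with some card $f_I / {\equiv}$ of $f$, hence $f'_J$ is essentially unary for every $J \in \couples[\nset{n}]$. Applying the converse direction of Theorem~\ref{thm:GSL} to $f'$, we conclude that $f'$ itself is essentially unary and, furthermore, that every identification minor of $f'$ is equivalent to $f'$.

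It now suffices to compare any one card of $f'$ with any one card of $f$. Pick any $I \in \couples[\nset{n}]$ and any $J \in \couples[\nset{n}]$ with $f_I \equiv f'_J$ (which exists because the decks coincide). Then $f \equiv f_I \equiv f'_J \equiv f'$, proving that $f$ is reconstructible. The argument has no real obstacle; essentially all the difficulty has been packaged into Theorem~\ref{thm:GSL}, and the corollary amounts to using its two directions in sequence, together with the strong supplementary conclusion that each identification minor is equivalent to the function itself.
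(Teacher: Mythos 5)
Your proof is correct and follows essentially the same route as the paper: the paper's (terser) proof likewise notes that by Theorem~\ref{thm:GSL} the essentially unary function $f$ has the unique identification minor $f$ itself, and then invokes the same theorem to conclude that any reconstruction must be equivalent to $f$. Your version merely spells out the intermediate steps (the deck equality forcing all cards of $f'$ to be essentially unary, the converse direction of the theorem, and the final chain of equivalences) that the paper leaves implicit.
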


\begin{proof}
The function $f$ has a unique identification minor, namely $f$. It follows from Theorem~\ref{thm:GSL} that any reconstruction of $f$ is equivalent to $f$, that is, $f$ is reconstructible.
\end{proof}

\begin{proposition}
\label{prop:setrecclones}
Assume that $k = \card{A}$ and $(A; {\circ})$ is a semilattice or a Boolean group.
If $\mathcal{C}$ is a subclone of $\Clo({\circ})$, then the class
$\mathcal{C}^{(\geq k + 2)}$
is set-reconstructible.
\end{proposition}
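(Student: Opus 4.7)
The plan is to classify $f$ up to equivalence and dispatch each case, relying on the tools already introduced in the excerpt.

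Because $\mathcal{C} \subseteq \Clo(\circ)$ and $(A; \circ)$ is a semilattice or a Boolean group, the commutativity, associativity, and either idempotence or $x \circ x = 0$ identities reduce every $f \in \mathcal{C}^{(\geq k+2)}$, up to equivalence, to a constant (only possible in the Boolean-group case), a projection, or $\circ^{(n)}_\ell$ for some $2 \leq \ell \leq n$. When $f$ falls into the \emph{unique identification minor} subcases---constants, essentially unary functions, and $\circ^{(n)}_n$---Lemma~\ref{lem:recsetrec} reduces set-reconstructibility to reconstructibility. These follow, respectively, from a pigeonhole argument (since $n > k$, every $\vect{a} \in A^n$ has a repeated coordinate and hence lies in the image of some $\delta_I$, so any set-reconstruction of a constant must itself be that constant), from Corollary~\ref{cor:GSL}, and from Theorem~\ref{thm:totsymm} applied to the totally symmetric function $\circ^{(n)}_n$ (whose identification minors are all equivalent to $\circ^{(n-1)}_{n-1}$ in the semilattice case and to $\circ^{(n-1)}_{n-2}$ in the Boolean-group case).

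The substantive case is $f \equiv \circ^{(n)}_\ell$ with $2 \leq \ell < n$, where $\setdeck f$ consists of exactly two equivalence classes of distinct essential arity: $\ell$ and $\ell - 1$ in the semilattice case, and $\ell$ and $\ell - 2$ in the Boolean-group case. For a set-reconstruction $g$ of $f$ I would first show that the essential arity $m$ of $g$ equals $\ell$. The lower bound $m \geq \ell$ is immediate, as some identification minor of $g$ has essential arity $\ell$. The upper bound $m \leq \ell$ is obtained by identifying two inessential arguments of $g$ (possible as soon as $g$ has at least two of them): the resulting minor has essential arity $m$, which must therefore appear as an essential arity in the set-deck. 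The remaining possibilities $m \in \{n - 1, n\}$ are ruled out by invoking Willard's Theorem~\ref{thm:Willard2.6} (whose arity hypothesis $n \geq \max(k,3) + 2$ matches $n \geq k + 2$ when $k \geq 3$) together with the order-preservation inheritance from Example~3.17 of~\cite{LehtonenSymmetric} in the semilattice case and the Zhegalkin-polynomial analysis of the Boolean-group part of Lemma~\ref{lem:4-aryBoolean}; the base case $k = 2$, $n = 4$ is covered directly by Lemma~\ref{lem:4-aryBoolean}.

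With $m = \ell$ in hand, write $g(\vect{a}) = h(a_{i_1}, \dots, a_{i_\ell})$, where $h$ is the $\ell$-ary essential restriction of $g$ and depends on all its arguments. The equivalence class $[h]$ appears in $\setdeck g$---any $g_I$ with $I$ meeting the inessential positions of $g$ yields a minor equivalent to $h$---and since $[h]$ has essential arity $\ell$ it must coincide with the unique element of $\setdeck g$ of essential arity $\ell$, namely $[\circ_\ell]$. Total symmetry of $\circ_\ell$ then forces $h = \circ_\ell$ itself, whence $g \equiv \circ^{(n)}_\ell = f$. The principal obstacle is the boundary analysis in the preceding paragraph (ruling out $m \in \{n-1, n\}$), where the straightforward ``identify two inessentials'' trick fails and one must appeal to the deeper symmetry and polynomial-structure tools of Willard's theorem and Lemma~\ref{lem:4-aryBoolean}.
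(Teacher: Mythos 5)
Your proposal follows essentially the same route as the paper: the same case split according to the essential arity of $f$, the same reduction of the unique-identification-minor cases via Lemma~\ref{lem:recsetrec}, Theorem~\ref{thm:totsymm} and Corollary~\ref{cor:GSL}, and the same use of Willard's theorem together with Lemma~\ref{lem:4-aryBoolean} to exclude a set-reconstruction that depends on all $n$ arguments.

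One step is misattributed and would fail as written: you rule out essential arity $m = n-1$ for the set-reconstruction $g$ by appealing to Theorem~\ref{thm:Willard2.6} and Lemma~\ref{lem:4-aryBoolean}, but Willard's theorem applies only to functions depending on \emph{all} of their arguments, and Lemma~\ref{lem:4-aryBoolean} likewise only excludes set-reconstructions depending on all four arguments; neither says anything about a $g$ with exactly one inessential argument. The repair is already contained in your own text: the ``identify two inessential arguments'' trick is needlessly restrictive. If $g$ has even one inessential argument $j$, then identifying $j$ with any other argument yields a card equivalent to $g$ itself, so the class of $g$ lies in $\setdeck f = \{[\circ_\ell],[\circ_s]\}$ and hence $m \in \{\ell, s\}$; combined with your lower bound $m \geq \ell$ this gives $m = \ell$ uniformly for all $m \leq n-1$. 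This is exactly how the paper handles the entire range $m \leq n-1$ in one stroke, reserving Willard's theorem and Lemma~\ref{lem:4-aryBoolean} for the single remaining case $m = n$. A cosmetic difference: you dispose of constant functions by a direct pigeonhole argument (since $n > k$, every $n$-tuple has a repeated entry and so lies in the image of some $\delta_I$), whereas the paper folds them into the totally symmetric case; both work.
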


\begin{proof}
Let $f \colon A^n \to A$ be a member of $\mathcal{C}$.
We split the analysis in different cases according to the number of essential arguments of $f$.

Consider first the case that $f$ depends on all of its arguments or on none of them. Then $f$ is totally symmetric, and hence $f$ has a unique identification minor. By Theorem~\ref{thm:totsymm}, $f$ is reconstructible, and it follows from Lemma~\ref{lem:recsetrec} that $f$ is set-reconstructible.

Consider then the case that $f$ has $r$ essential arguments with $2 \leq r \leq n - 1$.
Then $f$ equals, up to permutation of arguments, $\circ^{(n)}_r$, and the deck of $f$ comprises
$\binom{r}{2}$ copies of $\circ_{r-1}$ and $\binom{n}{2} - \binom{r}{2}$ copies of $\circ_r$.
Thus, the set-deck of $f$ is $\{\circ_r, \circ_s\}$, where $s = r - 1$ if $(A;{\circ})$ is a semilattice and $s = r - 2$ if $(A; {\circ})$ is a Boolean group and $\circ_0$ stands for the constant function $0$.
Let $g \colon A^n \to A$ be a set-reconstruction of $f$.

Suppose, on the contrary, that $g$ depends on all of its arguments. If $n = 4$ and $k = 2$, then we have a contradiction with Lemma~\ref{lem:4-aryBoolean}. If $n \geq \max(k, 3) + 2$, then it follows from Theorem~\ref{thm:Willard2.6} that $g$ is totally symmetric. Hence $g$ has a unique identification minor. This contradicts the fact that the set-deck of $f$ is not a singleton.

We can thus assume that $g$ has inessential arguments. In this case, $g$ is a card of $g$. This implies that $g$ is equivalent to $\circ_s$ or $\circ_r$.
The former case is impossible, because $\circ_r$ is not a minor of $\circ_s$ (consider the number of essential arguments).
Consequently, $f \equiv g$, i.e., $f$ is set-reconstructible.

Finally, consider the case that $f$ has exactly one essential argument. Corollary~\ref{thm:GSL} shows that $f$ is reconstructible, which implies that $f$ is set-reconstructible by Lemma~\ref{lem:recsetrec}.
\end{proof}

\begin{theorem}
\label{thm:setrecPost}
Let $\mathcal{C}$ be a clone on $\{0, 1\}$.
If $\mathcal{C}$ is included in $\Lambda$, $V$ or $L$, then $\mathcal{C}^{(\geq 4)}$ is set-reconstructible.
Otherwise, $\mathcal{C}^{(\geq n)}$ is not weakly reconstructible for any $n \geq 1$, and $\mathcal{C}$ contains pairs of nonequivalent strongly hypomorphic functions of arbitrarily high arity.
\end{theorem}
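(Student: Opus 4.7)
The plan is to deduce both halves of the theorem directly from material already in place, so no substantially new argument will be needed. The second clause — that clones not contained in $\Lambda$, $V$, or $L$ fail to be weakly reconstructible and admit strongly hypomorphic nonequivalent pairs at arbitrary arity — is literally the corresponding half of Theorem~\ref{thm:recPost}, and it transfers verbatim since the statement makes no reference to set-hypomorphism.

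For the first clause, I would invoke Proposition~\ref{prop:setrecclones}. The remark immediately preceding that proposition identifies $\Lambda$, $V$, and $L$ with $\Clo({\circ})$ for $\circ$ ranging over meet, join, and addition modulo $2$ on $\{0,1\}$; the first two yield semilattices and the last a Boolean group. Consequently, any clone $\mathcal{C}$ contained in $\Lambda$, $V$, or $L$ is a subclone of some $\Clo({\circ})$ of the requisite type. Taking $k = \card{\{0,1\}} = 2$, we have $k+2 = 4$, and Proposition~\ref{prop:setrecclones} immediately gives that $\mathcal{C}^{(\geq 4)}$ is set-reconstructible.

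There is essentially no obstacle to overcome at this stage: the bulk of the work has already been absorbed into Proposition~\ref{prop:setrecclones}, which in turn relied on Lemma~\ref{lem:4-aryBoolean} to handle the delicate edge case $n=4$, $k=2$ (where Willard's theorem is unavailable), and on Theorem~\ref{thm:GSL} and Corollary~\ref{cor:GSL} for the essentially unary case. The content of the present theorem beyond Theorem~\ref{thm:recPost} therefore resides entirely in the positive direction, where \emph{reconstructible} is sharpened to \emph{set-reconstructible}; the proof is thus a short synthesis.
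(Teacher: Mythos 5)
Your proposal is correct and matches the paper's own proof, which likewise derives the negative half verbatim from Theorem~\ref{thm:recPost} and the positive half by applying Proposition~\ref{prop:setrecclones} with $k = 2$ (so $k+2 = 4$) to the clones $\Lambda$, $V$, $L$ viewed as $\Clo({\circ})$ for a semilattice or Boolean group operation. The paper states this in one line; your slightly more explicit synthesis is the same argument.
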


\begin{proof}
Follows immediately from Theorem~\ref{thm:recPost} and Proposition~\ref{prop:setrecclones}.
\end{proof}


\end{document}